\newtheorem{theorem}[equation]{Theorem}
\newtheorem{proposition}[equation]{Proposition}
\newtheorem{lemma}[equation]{Lemma}
\newtheorem{corollary}[equation]{Corollary}
\newtheorem{conjecture}[equation]{Conjecture}
\theoremstyle{definition}
\newtheorem{example}[equation]{Example}
\theoremstyle{remark}
\makeatletter\@addtoreset{equation}{section}\makeatother
\begin{document}

\large

\title{One base point free theorem for weak log Fano threefolds}

\thanks{The work was partially supported by
RFFI grant No. 08-01-00395-a and grant N.Sh.-1987.2008.1.}

\sloppy

\author{Ilya Karzhemanov}

\maketitle

\begin{abstract}
Let $(X,D)$ be log canonical pair such $\dim X = 3$ and the
divisor $-(K_X + D)$ is nef and big. For a special class of such
$(X,D)$'s we prove that the linear system $|-n(K_{X}+D)|$ is free
for $n \gg 0$.
\end{abstract}

\bigskip

\section{Introduction}
\label{section:introduction}

Let $X$ be algebraic variety\footnote{All algebraic varieties are
assumed to be projective and defined over $\mathbb{C}$.} of
dimension $\geqslant 2$ with a $\mathbb{Q}$-boundary $D$ such that
the pair $(X, D)$ is log canonical and the divisor $-(K_{X}+D)$ is
nef and big. Then one may consider the following

\begin{conjecture}[M. Reid (see \cite{kawamata-crep-blowup}, \cite{shok-complements})]
\label{theorem:reid-conjecture} The linear system $|-n(K_{X}+D)|$
is free for $n \gg 0$.
\end{conjecture}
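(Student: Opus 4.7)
The plan is to proceed by induction on $\dim X$ and to reduce the conjecture to an extension problem from the non-klt locus $S := \mathrm{LCS}(X,D)$. The motivating observation is that on the open complement $X \setminus S$ the pair is klt, so the classical Kawamata--Shokurov base-point-free theorem already gives freeness of $|-n(K_X+D)|$ there for $n \gg 0$; the difficulty is thus concentrated along $S$.

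First, I would pass to a log resolution $\mu\colon Y \to X$ of $(X,D)$ and write $K_Y + D_Y = \mu^{*}(K_X+D)$, where the reduced part $T := \lfloor D_Y \rfloor$ consists of the strict transform of $\lfloor D \rfloor$ together with the $\mu$-exceptional divisors of log discrepancy zero. Applying Kawamata--Viehweg vanishing to the $\mathbb{Q}$-divisor $-(n+1)\mu^{*}(K_X+D) - K_Y - \{D_Y\}$, which is nef and big by hypothesis, one obtains
$$
R^{i}\mu_{*}\mathcal{O}_Y\bigl(-n\mu^{*}(K_X+D) - T\bigr) = 0 \qquad (i > 0)
$$
for $n \gg 0$, and consequently a surjection
$$
H^{0}\bigl(X,\, -n(K_X+D)\bigr) \twoheadrightarrow H^{0}\bigl(T,\, -n\mu^{*}(K_X+D)\big|_{T}\bigr).
$$

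Next I would invoke adjunction with different: $(T, \mathrm{Diff}_{T}(D_Y - T))$ is again log canonical and its log anti-canonical class is $-\mu^{*}(K_X+D)\big|_{T}$, which is still nef and big. By the inductive hypothesis on dimension -- with base case $\dim = 2$, where Conjecture~\ref{theorem:reid-conjecture} is known in the log del Pezzo setting -- the restricted system is free for $n \gg 0$, and the extension statement above lifts freeness back to a Zariski neighbourhood of $S$ in $X$. Combining with a Shokurov-type perturbation of $D$ to an auxiliary klt boundary $D'$ satisfying $K_X + D' \equiv K_X + D$ off $S$, one then applies the klt base-point-free theorem on $X \setminus S$ and patches the two pieces together.

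The main obstacle, and the reason the present article restricts to a special class of pairs, lies in the extension step: when $S$ has several components meeting along higher-codimensional lc centres, or when $-(K_X+D)|_S$ loses its bigness after adjunction, the vanishing above fails and one must appeal to Fujino--Ambro-type extension machinery together with effective control of the non-lc ideal sheaf. It is precisely this cohomological passage from $S$ to $X$ -- automatic in the klt case -- that forms the heart of any proof of \emph{Reid's} conjecture, and I expect the special hypothesis of the paper to be designed exactly to make this extension tractable.
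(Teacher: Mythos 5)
This statement is a \emph{conjecture}, and the paper does not prove it --- on the contrary, Example~\ref{example:counter-example} shows it is \emph{false} for $\dim X \geqslant 3$. Your proposal therefore cannot be completed as written, and the precise point where it breaks is identifiable: in the adjunction step you assert that $-\mu^{*}(K_X+D)\big\vert_{T}$ ``is still nef and big.'' Nefness survives restriction, but bigness does not, and this loss is exactly the failure mode exhibited by the counterexample. There, $X$ is the cone over $S = \mathbb{P}_Z(\mathcal{E})$ with $Z$ elliptic and $\mathcal{E}$ the indecomposable degree-zero rank-$2$ bundle, $D = E \simeq S$ is the exceptional divisor of the blow up of the vertex, $-(K_X+E)$ is nef and big, yet $-(K_X+E)\big\vert_{E} = -K_E = 2C$ with $C^{2}=0$ is nef but \emph{not} big, the only effective curve in the ray $\mathbb{R}_{\geqslant 0}[C]$ is $C$ itself, and $|-nK_E|$ is not free for any $n$. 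So the surface case of the conjecture (which requires nef \emph{and} big) does not apply to the restricted pair, the restricted system has a nonempty base locus, and your surjection $H^{0}(X, -n(K_X+D)) \twoheadrightarrow H^{0}(T, \cdot)$ --- even granting the vanishing --- only propagates that base locus back to $X$ rather than removing it.

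You do correctly identify that the difficulty is concentrated on the lc locus and that the passage from $S$ to $X$ is the heart of the matter; the paper's actual contribution (Theorem~\ref{theorem:main}) is a special case in which $X$ is a smooth threefold, $D = S$ is a smooth surface, and one additionally assumes $S \cdot Z > 0$ for every $K_S$-trivial curve $Z$ on $S$ --- a hypothesis designed precisely to exclude the configuration of Example~\ref{example:counter-example}, where $S\big\vert_{S} \cdot C \leqslant 0$ for the $K_S$-trivial curve $C$. The proof of that theorem does not run by induction on dimension; it restricts to $S$ via the vanishing you describe (Proposition~\ref{theorem:restriction-on-s-of-l}), shows $K_S^2 = 0$, splits into the case where $S$ admits a $\mathbb{Q}$-complement (excluded by an MMP/flip analysis using the extra hypothesis) and the case where it does not (excluded by an intersection-theoretic computation on a blow up of the base curve $C$). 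If you want to salvage your outline, you must either add a hypothesis guaranteeing bigness (or at least semiampleness) of the restriction to every lc centre, or restrict to a class of pairs, as the paper does, for which the non-big restricted divisors can be handled by hand.
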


According to \cite[Proposition 11.1]{prok-complements} (see also
\cite{shok-complements}), Conjecture~\ref{theorem:reid-conjecture}
is true when $\dim X = 2$. Unfortunately, it is false when $\dim X
\geqslant 3$:

\begin{example}[see \cite{gongyo}]
\label{example:counter-example} Let $Z$ be a smooth elliptic curve
and $\mathcal{E}$ indecomposable rank $2$ vector bundle over $Z$
with $\deg(\mathcal{E}) = 0$ (see \cite{atiyah}). Put $S =
\mathbb{P}_{Z}(\mathcal{E})$ and let $C$ be the tautological
section on $S$. Then we have $C^{2} = 0$ and $K_{S} = -2C$. Let
$F$ be a fibre of the $\mathbb{P}^1$-bundle $S \to \mathbb{P}^1$.
Then the cone $\overline{NE}(S)$ is generated by two rays $R_{1} =
\mathbb{R}_{\scriptscriptstyle\geqslant 0}[C]$, $R_{2} =
\mathbb{R}_{\scriptscriptstyle\geqslant 0}[F]$, and there is no
curve $C' \ne C$ with $[C'] \in R_{1}$ (see \cite[Example
1.1]{shok-complements}). In particular, the linear system
$|-nK_S|$ is not free for any $n$. Consider the cone $X$ over $S
\subset \mathbb{P}^N$ with respect to some projective embedding
and the blow up $\sigma: Y \longrightarrow X$ of the vertex on $X$
with exceptional divisor $E$. We have $-(K_X + E) =
\sigma^{*}(\mathcal{O}_{X}(1)) + \pi^{*}(-K_{S})$, where $\pi: Y
\to S$ is the natural projection, which implies that $-(K_X + E)$
is nef and big. On the other hand, $(X,E)$ is purely log terminal
and $|-n(K_X + E)| \big\vert_{E} = |-nK_E|$ is not free for any
$n$ because $E \simeq S$. Moreover, $(X, E + \pi^{*}(C))$ is log
canonical, $-(K_X + E + \pi^{*}(C)) =
\sigma^{*}(\mathcal{O}_{X}(1)) + \pi^{*}(C)$ is nef and big, but
again $|-n(K_X + E + \pi^{*}(C))| \big\vert_{E} = |(-n/2)K_{E}|$
is not free. The latter shows that
Conjecture~\ref{theorem:reid-conjecture} is not true also for
strictly log canonical pairs (the special case of $\llcorner D
\lrcorner = 0$ and $\dim X \leqslant 4$ was treated in
\cite{gongyo}).
\end{example}

It follows from Example~\ref{example:counter-example} that the
case when $D$ contains a reduced part is far from being trivial.
The present paper aims to correct the main result of \cite{karz}.
We shall consider in some sense the simplest case when
Conjecture~\ref{theorem:reid-conjecture} is true for $\dim X
\geqslant 3$ and $\llcorner D \lrcorner \ne 0$:

\begin{theorem}
\label{theorem:main} Let $(X, D)$ be as above. Suppose that

\begin{itemize}

\item $X$ is a smooth $3$-fold and $ D = S$ is a smooth surface;

\item $S \cdot Z
> 0$ for every curve $Z$ on $S$ with $K_S \cdot Z = 0$.

\end{itemize}
Then the linear system $|-n(K_{X}+D)|$ is free for $n \gg 0$.
\end{theorem}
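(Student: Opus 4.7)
The plan is to reduce the problem to the semi-ampleness of $-K_S$ on the surface $S$, and then lift freeness to $X$ using Kawamata--Viehweg vanishing together with the base-point-free theorem for log canonical pairs.

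First, by adjunction $(K_X+S)|_S = K_S$, so the restriction of $L := -(K_X+S)$ to $S$ is $-K_S$, and $-K_S$ is nef on $S$. Since $L$ is nef and big on $X$, Kawamata--Viehweg vanishing yields
\[
H^1(X, -n(K_X+S) - S) = H^1(X, K_X + (n+1)L) = 0 \quad \text{for all } n \geq 0.
\]
The associated restriction sequence then produces a surjection $H^0(X, -n(K_X+S)) \twoheadrightarrow H^0(S, -nK_S)$. Hence once $|-nK_S|$ is base-point free on $S$ for some $n \gg 0$, the base locus of $|-n(K_X+S)|$ on $X$ is disjoint from $S$. Since $(X,S)$ is plt (as $X$ and $S$ are both smooth), its only non-klt centre is $S$; applying the base-point-free theorem for lc pairs (due to Ambro and Fujino) to the nef divisor $L$ -- noting that $aL - (K_X+S) = (a+1)L$ is nef and big for any $a > 0$ and that $L|_S = -K_S$ will be shown semi-ample -- then yields freeness of $|-nL|$ on all of $X$ for $n \gg 0$.

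The core step is therefore the semi-ampleness of $-K_S$ on $S$. Since $-K_S$ is nef, $K_S^2 \geq 0$. In the case $K_S^2 > 0$, the divisor $-K_S$ is nef and big on $S$ and $(-K_S) - K_S = -2K_S$ is also nef and big, so the classical Kawamata--Shokurov base-point-free theorem on surfaces immediately gives freeness of $|-nK_S|$ for $n \gg 0$.

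The subtle case is $K_S^2 = 0$, which is precisely where Example~\ref{example:counter-example} shows the statement can fail absent the hypothesis on $S \cdot Z$. Here I would set $N := S|_S$; the hypothesis translates to $N \cdot v > 0$ for every non-zero $v$ in the null face $F_0 := \{ v \in \overline{NE}(S) : K_S \cdot v = 0 \}$. A compactness and Kleiman-criterion argument on the Mori cone of $S$ then produces some $\epsilon > 0$ for which $-K_S + \epsilon N$ is ample -- this is exactly the positivity that Example~\ref{example:counter-example} lacks, since there the unique null curve $C$ satisfies $N \cdot C < 0$. From ampleness of the perturbation $-K_S + \epsilon N$ I would derive semi-ampleness of $-K_S$ itself, either by invoking the structure of smooth surfaces with nef anticanonical divisor of square zero (apart from the numerically trivial case, such surfaces are the rigid ruled surfaces over elliptic curves modelled on Example~\ref{example:counter-example}, excluded by $N$-positivity on $F_0$), or by a direct surface-theoretic argument combining $N$ with the geometry of the extremal face $F_0$.

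The principal obstacle I expect is this last deduction: passing from the numerical positivity ``$-K_S + \epsilon N$ is ample'' to the actual semi-ampleness of $-K_S$ on a surface with $(-K_S)^2 = 0$. Everything else -- adjunction, vanishing, and the lift to $X$ via the lc base-point-free theorem -- is essentially formal once this surface-level input is secured.
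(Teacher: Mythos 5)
Your reduction to the surface is sound and matches the paper's Proposition~\ref{theorem:restriction-on-s-of-l}: Kawamata--Viehweg vanishing gives the surjection $H^{0}(X,-n(K_X+S))\twoheadrightarrow H^{0}(S,-nK_S)$, so $\mathrm{Bs}(|-n(K_X+S)|)\cap S=\mathrm{Bs}(|-nK_S|)$, and freeness on $X$ follows once the base locus misses $S$. The case $K_S^2>0$ is also handled correctly. But the step you flag as the ``principal obstacle'' is a genuine gap, and the route you sketch for it fails. The point is that semi-ampleness of $-K_S$ in the case $K_S^2=0$ is \emph{not} a consequence of any numerical positivity of $N=S\big\vert_S$ on the null face of $\overline{NE}(S)$. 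Take $S=\mathbb{P}_Z(\mathcal{E})$ as in Example~\ref{example:counter-example}: the null face is the ray spanned by $C$, the class $N=F$ (a fibre) satisfies $N\cdot C=1>0$, and $-K_S+\epsilon N=2C+\epsilon F$ is ample by Nakai--Moishezon; nevertheless $|-nK_S|=\{nC\}$ is never free. The same failure occurs for rational $S$ (blow-up of $\mathbb{P}^2$ at nine very general points on a cubic, where $-K_S=C$ has non-torsion normal bundle). So neither your proposed classification (which also omits these rational surfaces) nor the implication ``$-K_S+\epsilon N$ ample $\Rightarrow$ $-K_S$ semi-ample'' is available; the hypothesis $S\cdot Z>0$ does not exclude these surfaces intrinsically.

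What the hypothesis actually controls is the normal bundle of $S$ (equivalently $\deg\mathcal{N}_{C/X}=S\cdot C>0$ for the offending $K_S$-trivial curve $C$), and the theorem's content is that such an $S$ with $-K_S$ non-semi-ample cannot sit inside a threefold $X$ with $-(K_X+S)$ nef and big. This forces a genuinely three-dimensional argument, which is where the paper spends all its effort: in the case where $S$ admits a $\mathbb{Q}$-complement it runs the Cone Theorem on $X$, extremal contractions and $(K_X+S)$-flips to produce a second section of $|-nK_S|$ and reach a contradiction (Section~\ref{section:complements}); in the remaining case $S=\mathbb{P}_Z(\mathcal{E})$ it blows up $X$ along $C$, builds a nef and big divisor $R$ on the blow-up using $S\cdot C=-b>0$, and contradicts Kawamata--Viehweg vanishing via a section supported on the base component over $C$ (Section~\ref{section:special-case}). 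A secondary caveat: your appeal to the Ambro--Fujino base-point-free theorem with ``$(a+1)L$ nef and big'' is not legitimate as stated, since that theorem needs ampleness or log bigness on the lc centre, and $L\big\vert_S=-K_S$ has square zero here; it only becomes usable after semi-ampleness of $-K_S$ is already secured, which is the very point at issue.
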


It follows from Example~\ref{example:counter-example} that the
assertion of Theorem ~\ref{theorem:main} is false without the
additional assumption on $K_S$-trivial curves. This suggests the
following
\begin{conjecture}
\label{theorem:my-conjecture} Let $(X, D)$ be as above. Suppose
that $X$ is $\mathbb{Q}$-factorial and $(K_{X} + D) \cdot S^{\dim
X - 1} \geqslant 0$ for every irreducible component $S \subseteq
\llcorner D \lrcorner$. Then the linear system $|-n(K_{X}+D)|$ is
free for $n \gg 0$.
\end{conjecture}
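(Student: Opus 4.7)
The plan is to apply Fujino's base point free theorem for log canonical pairs so as to reduce the problem to semi-ampleness of $-K_S$ on $S$, and then to secure this on the surface using the hypothesis on $K_S$-trivial curves.

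Set $L := -(K_X+S)$, a nef Cartier divisor (Cartier because $X$ is smooth). The pair $(X, S)$ is log canonical with $S$ (smooth and prime) as its unique log canonical centre, and for every $a > 0$ the divisor $aL - (K_X+S) = (a+1)L$ is nef and big. Fujino's base point free theorem for lc pairs thus gives: $L$ is semi-ample on $X$ --- equivalently $|-n(K_X+S)|$ is base-point-free for $n \gg 0$ --- if and only if $L|_S = -K_S$ is semi-ample on $S$.

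On the smooth surface $S$ the divisor $-K_S$ is nef, and I would distinguish three cases. If $(-K_S)^2 > 0$ then $-K_S$ is nef and big, and the two-dimensional case of Conjecture~\ref{theorem:reid-conjecture} (cited in the Introduction) delivers semi-ampleness. If $-K_S \equiv 0$ numerically, then by the Enriques--Kodaira classification $K_S$ is torsion and $-K_S$ is semi-ample trivially. The remaining case --- $-K_S$ nef, $(-K_S)^2 = 0$, $-K_S \not\equiv 0$ --- is the delicate one; the structure theory of smooth surfaces with such anticanonical class forces $S$ to be, as in Example~\ref{example:counter-example}, a ruled surface $\pi: S \to Z$ over an elliptic curve with Atiyah-type indecomposable bundle, containing a unique section $C \cong Z$ with $C^2 = K_S \cdot C = 0$. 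The hypothesis then gives $S \cdot C = \deg N_{S/X}|_C > 0$ and consequently $\deg N_{C/X} > 0$ via the normal bundle exact sequence. I would derive a contradiction from this positivity combined with the fact that $C \subset X$ lies on the null face $\overline{NE}(X) \cap L^{\perp}$ of the nef and big divisor $L$: after a small klt perturbation of $S$, the cone theorem produces a contraction of $C$ on the smooth threefold $X$, whose Mori-theoretic classification is incompatible with $\deg N_{C/X} > 0$.

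The principal technical obstacle is this last case --- excluding the Atiyah-bundle configuration on $S$ under the positivity assumption $N_{S/X} \cdot C > 0$ for $K_S$-trivial $C$. Example~\ref{example:counter-example} produces the configuration explicitly (as a cone over an Atiyah ruled surface, where $N_{S/X}|_C$ is anti-ample and the hypothesis fails), so the hypothesis is decisive; converting the inequality $N_{S/X} \cdot C > 0$ into a genuine contradiction via the birational geometry of the smooth threefold $X$ is the technical heart of the proof, and presumably the point at which the earlier argument of \cite{karz} required correction.
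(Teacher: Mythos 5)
First, the statement you are proving is stated in the paper as a \emph{conjecture}: the paper offers no proof of it, only of the special case Theorem~\ref{theorem:main}. Your argument silently specializes to that special case --- you take $X$ smooth of dimension $3$ and $D=S$ a smooth irreducible surface, so that $L=-(K_X+S)$ is Cartier and adjunction gives $L|_S=-K_S$ --- whereas the conjecture allows $X$ merely $\mathbb{Q}$-factorial of higher dimension, a reducible $\llcorner D\lrcorner$, and a nontrivial fractional part of $D$; moreover its hypothesis is the aggregate numerical condition $(K_X+D)\cdot S^{\dim X-1}\geqslant 0$, not the curve-wise condition ``$S\cdot Z>0$ for $K_S$-trivial $Z$'' that you actually invoke. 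None of your steps (in particular the reduction via Fujino's theorem to semi-ampleness on the lc centres, which in general form a singular, reducible, stratified locus) is carried out in that generality, so as a proof of the conjecture the proposal does not get off the ground.

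Second, even within the setting of Theorem~\ref{theorem:main} there is a concrete gap in your surface trichotomy. After reducing to ``$-K_S$ nef, $K_S^2=0$, $-K_S\not\equiv 0$'' you assert that $S$ is forced to be the Atiyah-type elliptic ruled surface of Example~\ref{example:counter-example}. That is false: a rational surface obtained by blowing up $\mathbb{P}^2$ in nine points on a cubic (or $\mathbb{F}_m$ in eight points) can have $-K_S$ nef and effective with $K_S^2=0$ while $-K_S$ fails to be semi-ample, e.g.\ when the normal bundle of the anticanonical curve is non-torsion. Excluding exactly these rational configurations is the content of all of Section~\ref{section:complements} (Lemma~\ref{theorem:S-is-rational} through Lemma~\ref{theorem:many-sections}), which is the bulk of the paper's proof and proceeds by running $(K_X+S)$-flips along extremal rays contained in $S$; your proposal skips this case entirely. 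Finally, for the genuinely non-rational (Atiyah) case your proposed contradiction ``via a Mori contraction of $C$'' is only a hope: $(K_X+S)\cdot C=0$ and $S\cdot C>0$ do make $C$ negative for a klt perturbation $K_X+(1-\epsilon)S$, but nothing guarantees that $[C]$ spans an extremal ray or is contracted, and no incompatibility with $\deg\mathcal{N}_{C/X}>0$ is actually derived. The paper instead excludes this case in Section~\ref{section:special-case} by an explicit Kawamata--Viehweg vanishing computation on the blow-up of $X$ along $C$, producing contradictory section counts on the exceptional geometry. Both halves of the actual argument are therefore missing from the proposal.
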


I would like to thank Y. Gongyo for pointing out the mistake in
\cite{karz}.

\section{Preliminaries}
\label{section:preliminary-results}

We use standard notation, notions and facts from the theory of
minimal models and singularities of pairs (see
\cite{kawamata-matsuda-matsuki}, \cite{kollar-mori},
\cite{kollar-sing-of-pairs}, \cite{kollar-92}). In what follows,
$(X,S)$ is the pair from Theorem~\ref{theorem:main}. In order to
prove Theorem~\ref{theorem:main}, we assume that
$\mathrm{Bs}(|-n(K_{X}+S)|) \ne
\emptyset$,\footnote{$\mathrm{Bs}(\mathcal{M})$ denotes the base
locus of the linear system $\mathcal{M}$.} where $n \gg 0$.

\begin{proposition}
\label{theorem:restriction-on-s-of-l} We have
$$
\mathrm{Bs}(|-n(K_{X}+S)|) \cap S =
\mathrm{Bs}(|-n(K_{X}+S)\big\vert_{S}|) = \mathrm{Bs}(|-nK_{S}|)
\ne \emptyset.
$$
\end{proposition}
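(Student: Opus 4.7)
The plan is to establish the two equalities using adjunction and Kawamata--Viehweg vanishing, and then to derive the non-emptiness by contradiction, invoking the base-point-free theorem on a klt perturbation.

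The second equality is immediate from adjunction: since $X$ is smooth and $S$ is a smooth Cartier divisor, $(K_X+S)|_S = K_S$, so $-n(K_X+S)|_S \sim -nK_S$ and the two complete linear systems on $S$ coincide. For the first equality I would apply Kawamata--Viehweg vanishing to the restriction sequence
$$0 \longrightarrow \mathcal{O}_X(-n(K_X+S)-S) \longrightarrow \mathcal{O}_X(-n(K_X+S)) \longrightarrow \mathcal{O}_S(-nK_S) \longrightarrow 0.$$
The left-hand twist rewrites as $K_X + \bigl(-(n+1)(K_X+S)\bigr)$, and since $-(n+1)(K_X+S)$ is nef and big, Kawamata--Viehweg vanishing kills $H^1$ of the kernel and gives surjectivity of the restriction map on global sections. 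A standard argument then yields the first equality: a point $p \in S$ is a base point of $|-n(K_X+S)|$ iff every section vanishes at $p$ iff, by surjectivity, every section of $|-n(K_X+S)|_S|$ vanishes at $p$.

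The substantive point is the non-emptiness, which I would prove by contradiction. Suppose $\mathrm{Bs}(|-n(K_X+S)|) \cap S = \emptyset$, so by the two equalities $|-nK_S|$ is base-point-free. Using the hypothesis $S \cdot Z > 0$ for every $K_S$-trivial curve $Z \subset S$, together with the nefness of $-K_S$ on $S$ (from adjunction applied to the nefness of $-(K_X+S)$), I would find $\epsilon > 0$ small enough that $-K_S + \epsilon S|_S$ is still nef on $S$. Passing back to $X$ via adjunction for curves contained in $S$, this shows that $-(K_X + (1-\epsilon)S) = -(K_X+S) + \epsilon S$ is nef and big on $X$, while $(X, (1-\epsilon)S)$ is klt. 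Taking $D = -(K_X+S)$, the divisor $aD - (K_X+(1-\epsilon)S) = -(a+1)(K_X+S) + \epsilon S$ is nef and big for all $a \geq 1$, and the Kawamata--Shokurov base-point-free theorem yields that $|-m(K_X+S)|$ is base-point-free for $m \gg 0$, contradicting the standing assumption.

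The delicate technical point is the nefness of the perturbation $-K_S + \epsilon S|_S$: the hypothesis is given curve-by-curve, and one must verify the positivity propagates to the full face $K_S^\perp \cap \overline{NE}(S)$. On the surface $S$ this should be controllable, since every effective curve satisfies $K_S \cdot C \leq 0$, so the $K_S$-negative extremal rays are discrete and the $K_S$-trivial face of $\overline{NE}(S)$ is spanned by $K_S$-trivial curve classes on which $S|_S$ is strictly positive by hypothesis; a convexity/averaging argument on extremal rays then yields a uniform $\epsilon > 0$.
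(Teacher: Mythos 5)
Your proof of the two equalities coincides with the paper's: the same restriction sequence, the same rewriting $-n(K_X+S)-S = K_X - (n+1)(K_X+S)$, and Kawamata--Viehweg vanishing to get surjectivity on global sections. The divergence, and the problem, is in the non-emptiness. The paper disposes of it in one line: if $\mathrm{Bs}(|-n(K_X+S)|)\cap S=\emptyset$, then the base locus avoids the unique non-klt center of $(X,S)$, so the tie-breaking/lc-center argument in the standard proof of the Basepoint-free Theorem runs unobstructed and forces $\mathrm{Bs}(|-n(K_X+S)|)=\emptyset$, contradicting the standing assumption made at the start of Section 2. You instead try a global klt perturbation, and the step you yourself flag as delicate --- finding a uniform $\epsilon>0$ with $-(K_X+S)+\epsilon S$ nef --- is a genuine gap, not a technicality.

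Concretely: the hypothesis controls $S\cdot Z$ only on $K_S$-\emph{trivial} curves, but the curves obstructing nefness of $-K_S+\epsilon S\vert_S$ are the $K_S$-\emph{negative} curves $Z\subset S$ with $S\cdot Z\ll 0$. For a $(-1)$-curve $E$ one has $-K_S\cdot E=1$ while $S\cdot E=-a$ can be an arbitrary negative integer (this is exactly the situation exploited in the proof of Proposition 3.3, where the $a_i$ are driven up by blow-ups), so the required $\epsilon\leqslant 1/a$ need not be bounded below over infinitely many such curves, and no uniform $\epsilon$ exists. Your proposed repair via extremal rays also does not close the gap on the $K_S$-trivial face: that face of $\overline{NE}(S)$ need not be spanned by classes of actual curves, and strict positivity of a divisor on every curve does not imply nefness (Mumford's example). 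There is also a structural warning sign: your perturbation argument nowhere uses the assumed base-point-freeness of $|-nK_S|$, so if it were correct it would prove Theorem 1.3 outright and render Sections 3--4 of the paper superfluous. Replace this step by the paper's observation (or any tie-breaking argument localized away from $S$) to obtain the contradiction with the standing assumption $\mathrm{Bs}(|-n(K_X+S)|)\ne\emptyset$.
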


\begin{proof}
Consider the exact sequence
\begin{eqnarray}
\nonumber 0 \to \mathcal{O}_{X}(-n(K_{X}+S)-S) \to
\mathcal{O}_{X}(-n(K_{X}+S)) \to
\mathcal{O}_{S}(-n(K_{X}+S)\big\vert_{S}) \to 0.
\end{eqnarray}
We have
\begin{eqnarray}
\nonumber H^{1}(X,\mathcal{O}_{X}(-n(K_{X}+S)-S)) =
H^{1}(X,\mathcal{O}_{X}(K_{X} - (n+1)(K_{X}+S))) = 0
\end{eqnarray}
by Kawamata--Viehweg Vanishing Theorem. This gives the exact
sequence
\begin{equation}
\nonumber H^{0}(X,\mathcal{O}_{X}(-n(K_{X}+S)) \to
H^{0}(S,\mathcal{O}_{S}(-n(K_{X}+S)\big\vert_{S})) \to 0,
\end{equation}
which implies that
$$
\mathrm{Bs}(|-n(K_{X}+S)|) \cap S =
\mathrm{Bs}(|-n(K_{X}+S)\big\vert_{S}|) = \mathrm{Bs}(|-nK_{S}|).
$$
Finally, if $\mathrm{Bs}(|-n(K_{X}+S)|) \cap S = \emptyset$, then
$\mathrm{Bs}(|-n(K_{X}+S)|) = \emptyset$ (see the proof of the
Basepoint-free Theorem in \cite{kollar-mori})), a contradiction.
\end{proof}

From Proposition~\ref{theorem:restriction-on-s-of-l} we get the
following

\begin{corollary}
\label{theorem:K-S-square} Equality $K_{S}^2 = 0$ holds.
\end{corollary}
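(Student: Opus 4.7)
The plan is to derive the conclusion by playing the surface version of Reid's conjecture, already known by \cite[Proposition 11.1]{prok-complements} and recalled in the introduction, against Proposition~\ref{theorem:restriction-on-s-of-l}.

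First I would note that by adjunction $(K_{X}+S)\big\vert_{S} = K_{S}$, so $-K_{S} = -(K_{X}+S)\big\vert_{S}$ is the restriction of a nef divisor on $X$ and is therefore nef on $S$. In particular $K_{S}^{2} = (-K_{S})^{2} \geqslant 0$, so to prove the corollary it suffices to exclude the possibility $K_{S}^{2} > 0$.

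Suppose for contradiction that $K_{S}^{2} > 0$. Then $-K_{S}$ is nef and big on the smooth surface $S$, and since $(S,0)$ is log canonical (even terminal), the two-dimensional case of Conjecture~\ref{theorem:reid-conjecture} applies and yields that $|-nK_{S}|$ is basepoint-free for all $n \gg 0$. This directly contradicts Proposition~\ref{theorem:restriction-on-s-of-l}, which asserts $\mathrm{Bs}(|-nK_{S}|) \ne \emptyset$ for $n \gg 0$. Hence $K_{S}^{2} = 0$.

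There is no real obstacle here: the only substantive input is the already-established surface case of Reid's conjecture, and the additional hypothesis of Theorem~\ref{theorem:main} on $K_{S}$-trivial curves does not enter at this stage — it will presumably be used later, once $K_{S}^{2} = 0$ is in hand, to analyze the numerical structure of $S$.
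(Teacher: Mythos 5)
Your proof is correct and follows essentially the same route as the paper: first $K_S^2\geqslant 0$ from nefness of $-(K_X+S)$, then exclusion of $K_S^2>0$ because $-K_S$ nef and big forces $|-nK_S|$ to be free, contradicting Proposition~\ref{theorem:restriction-on-s-of-l}. The only cosmetic difference is that you cite the surface case of Conjecture~\ref{theorem:reid-conjecture} where the paper invokes the Basepoint-free Theorem directly; for the klt pair $(S,0)$ with $-K_S$ nef and big these amount to the same thing.
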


\begin{proof}
Since $-(K_X + S)$ is nef, we have
$$
K_{S}^2 = (K_X + S)^2 \cdot S \geqslant 0
$$
(see \cite[Theorem 1.38]{kollar-mori}). Now, if $K_{S}^2 > 0$,
then $-K_S$ is nef and big, and the Basepoint-free Theorem implies
that $\mathrm{Bs}(|-nK_{S}|) = \emptyset$, a contradiction.
\end{proof}

\section{Reduction to the non-complementary case}
\label{section:complements}

We use notation and conventions of
Section~\ref{section:preliminary-results}. Let us show that the
surface $S$ does not have $\mathbb{Q}$-complements. Assume the
contrary. Then we have the following

\begin{lemma}
\label{theorem:S-is-rational} $S$ is a rational surface.
\end{lemma}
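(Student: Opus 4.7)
My plan is to argue by contradiction: assume that $S$ is not rational. Since $-K_S$ is nef on the smooth surface $S$ with $K_S^2 = 0$ (Corollary~\ref{theorem:K-S-square}), the Enriques--Kodaira classification leaves only two possibilities: either $K_S \equiv 0$, in which case $S$ is K3, abelian, Enriques, or bielliptic; or $S$ is birationally ruled over a curve $C$ of genus $g \geq 1$.

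In the first case, the classification forces $mK_S \sim 0$ for some $m \in \{1,2,3,4,6\}$. Taking $n \gg 0$ divisible by $m$ then gives $-nK_S \sim 0$, so $|-nK_S|$ consists only of the zero divisor and has empty base locus, contradicting Proposition~\ref{theorem:restriction-on-s-of-l}.

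In the ruled case, let $\sigma : S \to S_0$ contract $k \geq 0$ exceptional curves onto a minimal model $\pi : S_0 \to C$. Then
\[
0 \;=\; K_S^2 \;=\; K_{S_0}^2 - k \;=\; 8(1-g) - k,
\]
which together with $g \geq 1$, $k \geq 0$ forces $g = 1$ and $k = 0$. Hence $S = \mathbb{P}_C(\mathcal{E})$ is a geometrically ruled surface over an elliptic curve. Normalizing $\mathcal{E}$ and writing $-K_S \equiv 2C_0 + eF$, with $C_0$ a minimal section ($C_0^2 = -e$) and $F$ a fibre, nefness of $-K_S$ gives $-K_S \cdot C_0 = -e \geq 0$, so $e \in \{-1, 0\}$ using Nagata's bound $e \geq -1$ on elliptic bases. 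I would then examine each case in Atiyah's classification of $\mathcal{E}$ and use the assumed log-canonical $\mathbb{Q}$-complement $D$ with $K_S + D \sim_{\mathbb{Q}} 0$ together with the hypothesis $S \cdot Z > 0$ for every $K_S$-trivial curve $Z$ (which includes $C_0$, since $-K_S \cdot C_0 = -e \leq 0$) to derive a contradiction: in the indecomposable $e = 0$ case, $C_0$ is unique in its numerical class (Example~\ref{example:counter-example}), forcing $D = 2C_0$ with coefficient $2 > 1$, a violation of the LC condition; in the other subcases more sections exist and the positivity condition $S|_S \cdot C_0 > 0$ would be played off against intersections of $S|_S$ with components of $D$.

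The main obstacle is this last step: Example~\ref{example:counter-example} shows that elliptic $\mathbb{P}^1$-bundles of the shape above occur naturally and do admit LC $\mathbb{Q}$-complements, so the positivity hypothesis $S \cdot Z > 0$ must be used essentially. Threading it through the intersection theory for each type in Atiyah's classification---particularly the decomposable case $\mathcal{E} = \mathcal{O}_C \oplus \mathcal{L}$ with $\mathcal{L}$ a nontorsion degree-zero line bundle, where additional $K_S$-trivial curves may appear---is where the real delicacy lies.
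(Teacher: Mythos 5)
Your reduction is fine as far as it goes: with $-K_S$ nef and $K_S^2=0$ (Corollary~\ref{theorem:K-S-square}), the non-rational possibilities are indeed $K_S\equiv 0$ (disposed of correctly, since then $-nK_S\sim 0$ for suitable $n$) and a geometrically ruled surface $\mathbb{P}_C(\mathcal{E})$ over an elliptic curve with invariant $e\in\{-1,0\}$. But the proof stops exactly where the lemma actually has content. In the decomposable subcase $\mathcal{E}=\mathcal{O}_C\oplus\mathcal{L}$ with $\mathcal{L}$ non-torsion of degree $0$, the two disjoint sections $C_0,C_1$ satisfy $K_S+C_0+C_1\sim 0$ with $(S,C_0+C_1)$ log canonical, so this surface \emph{does} admit an lc $\mathbb{Q}$-complement, and $h^0(-nK_S)=1$ with unique member $nC_0+nC_1$, so $\mathrm{Bs}(|-nK_S|)=C_0\cup C_1\ne\emptyset$. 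Hence no contradiction is available from the surface-intrinsic data you have assembled; one must genuinely use the ambient threefold via the hypothesis $S\cdot Z>0$ for $K_S$-trivial curves, and your proposal only says this "would be played off against intersections" without giving any mechanism. You yourself flag this as "where the real delicacy lies," which is an admission that the proof is not finished. The subcase $e=-1$ (Atiyah's indecomposable degree-one bundle, where $-K_S\equiv 2C_0-F$ is nef with $h^0(-K_S)$ small) is likewise announced but never treated; note also that for $e=-1$ one has $-K_S\cdot C_0=-e=1>0$, so $C_0$ is not $K_S$-trivial and the hypothesis on $S\cdot Z$ does not even apply to it, contrary to your parenthetical claim.

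For comparison, the paper does not attempt this classification by hand at all: it quotes the classification of $\mathbb{Q}$-complements on non-rational log surfaces with nef anticanonical class (Fedorov--Kudryavtsev, Theorem~1.3, together with Shokurov's Corollary~2.2 and Example~2.1 in \emph{Complements on surfaces}) to conclude that under the standing assumption that $S$ has a $\mathbb{Q}$-complement, non-rationality would force $\mathrm{Bs}(|-nK_S|)=\emptyset$, contradicting Proposition~\ref{theorem:restriction-on-s-of-l}. Your route is more self-contained in spirit, but the part you leave open is precisely the content of those cited results, so as written the argument has a genuine gap rather than being an alternative proof.
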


\begin{proof}
Suppose that $S$ is non-rational. Then it follows from the proof
of \cite[Theorem 1.3]{fed-kud}, \cite[Corollary
2.2]{shok-complements} and \cite[Example 2.1]{shok-complements}
that $\mathrm{Bs}(|-nK_{S}|) = \emptyset$, which contradicts
Proposition~\ref{theorem:restriction-on-s-of-l}.
\end{proof}

Thus, there exists a birational contraction $\chi: S
\longrightarrow \widetilde{S}$, where either $\chi$ is the blow up
of $\widetilde{S} = \mathbb{P}^{2}$ at some points $p_{1}, \ldots,
p_{9}$, or $\chi$ is the blow up of $\widetilde{S} =
\mathbb{F}_{m}$, $m \in \mathbb{N}$, at some points $q_{1},
\ldots, q_{8}$ (see Corollary~\ref{theorem:K-S-square}). To
simplify the notation, in what follows we assume that all $p_{i}$
(respectively, all $q_{i}$) are distinct. Further, by our
assumption the equivalence $K_{S} + \Delta \sim 0$ holds for some
effective $\mathbb{Q}$-divisor $\Delta$ such that the pair $(S,
\Delta)$ is log canonical. Then we have
$$
-K_{S} \sim \sum_{i = 1}^{N}\Delta_{i},
$$
$N \in \mathbb{N}$, where $\Delta_{i}$ are reduced and irreducible
curves such that $\Delta_i \cap \Delta_j \ne \emptyset$ for all $i
\ne j$ and the intersection is transversal.

\begin{lemma}
\label{theorem:curves-on-p-2} Equality $h^{0}(S,
\mathcal{O}_{S}(-nK_{S})) = 1$ holds.
\end{lemma}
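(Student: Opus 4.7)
Since $S$ is a smooth rational surface (Lemma~\ref{theorem:S-is-rational}) with $K_S^2=0$ (Corollary~\ref{theorem:K-S-square}), Riemann--Roch on $S$ gives $\chi(S,-nK_S)=\chi(\mathcal{O}_S)=1$, while Serre duality and effectivity of $-K_S\sim\sum\Delta_i$ yield $h^2(S,-nK_S)=h^0((n+1)K_S)=0$ for $n\geqslant 0$. The lemma therefore amounts to $h^1(S,-nK_S)=0$, equivalently $h^0(S,-nK_S)\leqslant 1$.

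I would induct on $n$ using the short exact sequence
\[
0\to\mathcal{O}_S(-(n-1)K_S)\to\mathcal{O}_S(-nK_S)\to\mathcal{O}_\Delta\bigl((-nK_S)|_\Delta\bigr)\to 0
\]
coming from $\Delta:=\sum\Delta_i\sim -K_S$. Since $-K_S$ is nef and $(-K_S)\cdot\Delta=K_S^2=0$, each $(-K_S)\cdot\Delta_i=0$, whence $(-K_S)|_\Delta\in\mathrm{Pic}^0(\Delta)$. The inductive step closes provided $(-K_S)|_\Delta$ is \emph{not} a torsion class, for then $h^0(\Delta,(-nK_S)|_\Delta)=0$ for every $n\geqslant 1$ and $h^0(S,-nK_S)$ stays equal to $h^0(\mathcal{O}_S)=1$ along the induction.

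The main obstacle is the non-torsion claim. Suppose to the contrary that $(-K_S)|_\Delta$ has finite order $d$; then the inductive vanishing $h^1(S,-(d-1)K_S)=0$ lifts the nowhere vanishing section of $(-dK_S)|_\Delta\cong\mathcal{O}_\Delta$ to a section of $\mathcal{O}_S(-dK_S)$ whose zero locus $D\in|-dK_S|$ is disjoint from $\Delta$. Each irreducible component $C$ of $D$ satisfies $K_S\cdot C=0$ by nefness of $-K_S$ and $(-K_S)\cdot D=0$, so the hypothesis of Theorem~\ref{theorem:main} forces $S\cdot C>0$, equivalently $K_X\cdot C=-S\cdot C<0$. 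Because $d\Delta$ and $D$ have disjoint supports, they span a base-point-free pencil in $|-dK_S|$, giving rise to an elliptic fibration $\pi:S\to\mathbb{P}^1$ whose general fibre $F$ inherits $K_S\cdot F=0$, $(K_X+S)\cdot F=0$ and $S\cdot F>0$.

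I plan to finish by contracting the $K_X$-negative, $(K_X+S)$-trivial extremal ray $\mathbb{R}_{\geqslant 0}[F]$ on the smooth threefold $X$ via the Cone Theorem, obtaining $\varphi:X\to X'$. Since $\varphi$ collapses every fibre of $\pi$ and these sweep out $S$, the exceptional locus of $\varphi$ contains $S$; the small-contraction alternative is then excluded because $\dim S=2$ exceeds the codimension-two bound on small exceptional loci, and the Mori-fibre alternative is excluded because a numerical factorisation $-(K_X+S)\equiv\varphi^{\ast}L$ through a lower-dimensional $X'$ would force $(-(K_X+S))^3=0$, contradicting bigness of $-(K_X+S)$. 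Thus $\varphi$ must be a divisorial contraction of the prime divisor $S$ to a curve, forcing $\mathcal{N}_{S/X}|_F$ to have negative degree on the contracted fibres --- a direct contradiction with $S\cdot F>0$. This establishes the non-torsion of $(-K_S)|_\Delta$ and completes the induction; the technical heart lies in this final case analysis of $\varphi$.
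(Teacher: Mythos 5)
Your Riemann--Roch setup and the reduction to the (non-)torsion of $(-K_S)\big\vert_{\Delta}$ in $\mathrm{Pic}^0(\Delta)$ is a legitimate, more explicit route, and everything up to the construction of the free pencil spanned by the disjoint members $d\Delta$ and $D$ of $|-dK_S|$ is sound. But the final step, which you yourself flag as the technical heart, is where the argument breaks: the Cone Theorem does not let you ``contract the extremal ray $\mathbb{R}_{\geqslant 0}[F]$'', because nothing guarantees that the class of a fibre $F$ of the elliptic pencil spans an extremal ray of $\overline{NE}(X)$. The Cone Theorem only gives a decomposition $[F]=\sum a_i[\ell_i]+[N]$ with the $\ell_i$ generating $K_X$-negative extremal rays and $K_X\cdot N\geqslant 0$; since $-(K_X+S)$ is nef and $(K_X+S)\cdot F=0$, every term is $(K_X+S)$-trivial, so all you learn is that some extremal ray $R_i$ satisfies $S\cdot R_i>0$ --- such a ray need not lie on $S$, its contraction need not have exceptional locus containing $S$, and no contradiction with $S\cdot F>0$ results. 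The trichotomy (small/divisorial/fibre type) you then run is analysis of a contraction whose existence has not been established.

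The irony is that the contradiction is already in hand two sentences earlier and requires no excursion to $X$ at all: once $|-dK_S|$ contains two disjoint members it is base point free, hence so is $|-nK_S|$ for $n$ a large multiple of $d$, which directly contradicts Proposition~\ref{theorem:restriction-on-s-of-l} ($\mathrm{Bs}(|-nK_S|)\ne\emptyset$). That is exactly the contradiction the paper uses: its proof observes that $h^0(S,\mathcal{O}_S(-nK_S))\geqslant 2$, together with the connectedness of the anticanonical cycle $\sum\Delta_i$ and the fact that $-K_S$ is nef with $K_S^2=0$, forces $|-nK_S|$ to be a free pencil, contradicting Proposition~\ref{theorem:restriction-on-s-of-l}. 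If you replace your last paragraph by that single citation, your argument becomes a correct and more detailed variant of the paper's proof.
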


\begin{proof}
We have $h^{0}(S, \mathcal{O}_{S}(-nK_{S})) > 0$. Suppose that
$h^{0}(S, \mathcal{O}_{S}(-nK_{S})) \geqslant 2$. Then, since the
sum $\sum_{i = 1}^{N}\Delta_{i}$ is connected and $-K_S$ is nef
with $K_{S}^{2} = 0$, $|-nK_{S}|$ is a free pencil, which
contradicts Proposition~\ref{theorem:restriction-on-s-of-l}.
\end{proof}

\begin{proposition}
\label{theorem:p-2-case} If $N = 1$, then $\widetilde{S} \ne
\mathbb{P}^{2}$.
\end{proposition}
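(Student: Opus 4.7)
I argue by contradiction: suppose $N=1$ and $\widetilde{S}=\mathbb{P}^{2}$. Then $\chi:S\to\mathbb{P}^{2}$ is the blow-up at nine distinct points $p_{1},\ldots,p_{9}$, and $\Delta_{1}\in|-K_{S}|$ is the unique irreducible reduced anticanonical divisor. By adjunction, $K_{\Delta_{1}}\sim(K_{S}+\Delta_{1})|_{\Delta_{1}}\sim 0$, so $p_{a}(\Delta_{1})=1$, and the log-canonical condition forces $\Delta_{1}$ to be either a smooth elliptic curve or a rational nodal curve.

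The first step is to show that the degree-$0$ line bundle $L:=\mathcal{O}_{S}(-K_{S})|_{\Delta_{1}}$ is non-torsion in $\operatorname{Pic}^{0}(\Delta_{1})$. For each $k\geqslant 1$, the restriction sequence (coming from $\Delta_{1}\sim-K_{S}$)
\[
0\to\mathcal{O}_{S}(-(k-1)K_{S})\to\mathcal{O}_{S}(-kK_{S})\to\mathcal{O}_{\Delta_{1}}(L^{\otimes k})\to 0,
\]
together with $H^{1}(\mathcal{O}_{S})=0$ (rationality of $S$) and the fact that on $\Delta_{1}$ a non-trivial degree-$0$ line bundle has $h^{0}=h^{1}=0$ while $\mathcal{O}_{\Delta_{1}}$ has $h^{0}=h^{1}=1$, shows by induction on $k$ that if $L^{\otimes m}\simeq\mathcal{O}_{\Delta_{1}}$ for some $m\geqslant 1$, then $h^{0}(S,-mK_{S})\geqslant 2$, and hence $h^{0}(S,-kmK_{S})\geqslant k+1$ for every $k\geqslant 1$. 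This contradicts Lemma~\ref{theorem:curves-on-p-2}.

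The remaining step, which I expect to be the main obstacle, is to derive a contradiction from the non-torsion of $L$ specifically in the case $\widetilde{S}=\mathbb{P}^{2}$. Letting $C_{0}:=\chi(\Delta_{1})\subset\mathbb{P}^{2}$ be the image irreducible cubic through $p_{1},\ldots,p_{9}$ and $h$ the degree-$3$ hyperplane divisor on $\Delta_{1}$, one has
\[
L\simeq\mathcal{O}_{\Delta_{1}}(3h-p_{1}-\cdots-p_{9}).
\]
I would then combine: (i) the nefness of $-K_{S}$, which by the classification of weak del Pezzo surfaces of degree $0$ severely constrains the configuration of the nine points on $C_{0}$; (ii) the hypothesis of Theorem~\ref{theorem:main} giving $S\cdot\Delta_{1}>0$ on $X$, and $S\cdot Z>0$ for any $K_{S}$-trivial $(-2)$-curve $Z\subset S$ (when such curves exist); and (iii) Kawamata--Viehweg vanishing applied to $-(K_{X}+S)$ on the ambient $3$-fold $X$, in the spirit of the proof of Proposition~\ref{theorem:restriction-on-s-of-l}, to lift an auxiliary section from $S$ to an extra element of $|-n(K_{X}+S)|$. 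The resulting inequality should either yield $h^{0}(-nK_{S})\geqslant 2$ --- again contradicting Lemma~\ref{theorem:curves-on-p-2} --- or force $L$ to be torsion, contradicting the first step.
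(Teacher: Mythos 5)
Your first step is correct as far as it goes: the induction on the restriction sequences $0\to\mathcal{O}_{S}(-(k-1)K_{S})\to\mathcal{O}_{S}(-kK_{S})\to L^{\otimes k}\to 0$, using $H^{1}(S,\mathcal{O}_{S})=0$, does show that if $L=\mathcal{O}_{S}(-K_{S})\vert_{\Delta_{1}}$ were torsion of order $m$ then $h^{0}(S,\mathcal{O}_{S}(-mK_{S}))\geqslant 2$, whence $h^{0}(S,\mathcal{O}_{S}(-nK_{S}))\geqslant 2$ for the given $n\gg 0$, contradicting Lemma~\ref{theorem:curves-on-p-2}. But note that non-torsion of $L$ is perfectly compatible with every standing hypothesis: blowing up $\mathbb{P}^{2}$ at nine general points of a smooth cubic gives exactly such an $S$ with $-K_{S}\sim\Delta_{1}$, $(S,\Delta_{1})$ log canonical, and $L$ non-torsion, so the $\mathbb{Q}$-complement assumption of Section~\ref{section:complements} is still satisfied. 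Hence no contradiction can be extracted from $S$ alone, and your Step 1, while valid, does not by itself make progress toward excluding $\widetilde{S}=\mathbb{P}^{2}$.

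The genuine gap is your second step, which is the entire content of the proposition and which you only describe as an expectation (``the resulting inequality should either yield \dots or force \dots''). You never specify which section on $S$ is to be lifted (there is none to lift, since $h^{0}(-nK_{S})=1$), nor how the hypothesis $S\cdot Z>0$ for $K_{S}$-trivial curves actually enters. The paper's proof is an essentially three-dimensional MMP argument that your sketch does not approximate: one repeatedly blows up $X$ along $C=\Delta_{1}$ to force the coefficients $a_{i}$ in $S\vert_{S}=\chi^{*}(aL)+\sum a_{i}E_{i}$ to become positive; the Cone Theorem together with the hypothesis on $K_{S}$-trivial curves then produces a $(K_{X}+S)$-negative extremal ray $R$ with $S\cdot R<0$, which is shown (Lemma~\ref{theorem:div-contr}) to be a small contraction generated by a $(-1)$-curve of $S$; the associated flip contracts that $(-1)$-curve on $S$, yielding $S^{+}$ with $-K_{S^{+}}$ nef and big, so $|-nK_{S^{+}}|$ is free; and a push-forward/vanishing argument over the base $W$ of the flip (Lemma~\ref{theorem:many-sections}) transports this back to $h^{0}(S,\mathcal{O}_{S}(-nK_{S}))\geqslant 2$, contradicting Lemma~\ref{theorem:curves-on-p-2}. (A separate reduction handles the nodal case of $\Delta_{1}$ via a blow-up and small resolution.) None of this machinery --- the coefficient-raising blow-ups, the extremal ray with $S\cdot R<0$, the exclusion of the divisorial case, the flip, and the section-counting across the flip --- is present or replaceable by the vanishing argument you gesture at, so the proposal does not prove the proposition.
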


\begin{proof}
Suppose that $\widetilde{S} = \mathbb{P}^{2}$. Let us consider two
cases:

\smallskip

{\bf Case (1).} The curve $C = \Delta_{1}$ is smooth. Write
$$
S\big\vert_{S} = \chi^{*}(aL) + \sum_{i=1}^{9}a_{i}E_{i},
$$
where $L$ is a line on $\mathbb{P}^{2}$, $a$ and $a_{i} \in
\mathbb{Z}$, $E_i = \chi^{-1}(p_{i})$. Let $\varphi: Y
\longrightarrow X$ be the blow up of $X$ at $C$ with exceptional
divisor $E$. For $S_{Y} = \varphi_{*}^{-1}(S)$, $\varphi$ induces
an isomorphism $\varphi_{\scriptscriptstyle S}: S_{Y} \simeq S$
such that $\varphi_{\scriptscriptstyle S}(S_{Y} \cap E) = C$ and
$\varphi_{\scriptscriptstyle S}$ is identical out of $C_{Y} =
S_{Y} \cdot E$, which implies that $\varphi_{\scriptscriptstyle
S}$ is an automorphism of $S$, identical on $\mathrm{Pic}(S)$. In
particular, we have
$$
E \cdot C_Y = E \cdot E \cdot S_Y = C_{Y}^2 = 0,
$$
which together with equalities
$$
K_{Y} + S_Y = \varphi^{*}(K_{X} + S) \qquad\mbox{and}\qquad S_Y =
\varphi^{*}(S) - E
$$
implies that to prove Proposition~\ref{theorem:p-2-case} we may
pass from $(X,S)$ to the pair $(Y, S_{Y})$. Moreover, we have
$$
S_{Y}\big\vert_{S_{Y}} = \chi^{*}(a_{Y}L_{Y}) +
\sum_{i=1}^{9}a_{i,Y}E_{i,Y},
$$
where $L_{Y} = \varphi_{*}^{-1}(L)$, $E_{i,Y} =
\varphi_{*}^{-1}(E_{i})$, $a_{Y}$ and $a_{i,Y} \in \mathbb{Z}$,
which implies that
$$
-a_{i,Y} = S_{Y} \cdot E_{i,Y} = S \cdot E_{i} - E \cdot E_{i,Y}
\leqslant S \cdot E_{i} - 1 = -a_{i} - 1,
$$
and hence $a_{i,Y} > a_{i}$.

Thus, applying the above arguments to $(Y,S_{Y})$, after a number
of blow ups we obtain that to prove
Proposition~\ref{theorem:p-2-case} we may assume that $a_{i}
> 0$ for all $i$. In particular, we have
$$
(K_{X} + S) \cdot E_{1} = K_S \cdot E_{1} < 0
\qquad\mbox{and}\qquad S\cdot E_{1} = -a_{1} < 0,
$$
and it follows from the Cone Theorem that equality $E_1 \equiv
\sum_{i}R_i + \sum_{j}C_{j}$ holds on $X$, where $R_i$ are $(K_{X}
+ S)$-negative extremal rays and $(K_{X} + S) \cdot C_j = 0$ for
all $j$. Moreover, by assumption on $K_S$-trivial curves we have
$S \cdot C_j \geqslant 0$ for all $j$, which implies that there
exists a $(K_{X} + S)$-negative extremal ray $R$ on $X$ such that
$S \cdot R < 0$. In particular, we have $R \subset S$ and the
extremal contraction $\mathrm{cont_{\scriptscriptstyle R}}: X
\longrightarrow W$ is birational.

\begin{lemma}
\label{theorem:div-contr} $\mathrm{cont_{\scriptscriptstyle R}}$
is not a divisorial morphism.
\end{lemma}

\begin{proof}
Assume the contrary. Then the image of $S$ is either a point or a
curve. But the first case is impossible because $(K_{X} + S) \cdot
C = 0$. Thus, $\mathrm{cont_{\scriptscriptstyle R}}(S)$ is a
curve. Then there exists a birational contraction $\chi': S
\longrightarrow \mathbb{P}^{2}$, which is the blow up at some
points $p'_{1}, \ldots, p'_{9}$ on $\mathbb{P}^{2}$, with
exceptional curves $E'_{1}, \ldots, E'_{9}$ such that

\begin{itemize}

\item $E'_{1} \cdot R = 1$ and $E'_{1} \cdot Z = 0$
for some curve $Z$ on $S$ such that $R =
\mathbb{R}_{\scriptscriptstyle \geqslant 0}[Z]$;

\item $R = \mathbb{R}_{\scriptscriptstyle \geqslant 0}[E'_{i}]$ for
all $i \geqslant 2$.

\end{itemize}
Let $\varphi: Y \longrightarrow X$ be the blow up of $X$ at
$E'_{1}$ with exceptional divisor $E$. Then $Y$ possesses a
$(K_{Y} + S_{Y})$-negative extremal ray $R_Y = \varphi_{*}^{-1}(R)
+ \alpha e$, where $\alpha \in \mathbb{Q}$, $S_Y =
\varphi_{*}^{-1}(S)$ and $e$ is the numerical class of a fibre of
$\varphi$. Note that $\alpha \leqslant 0$, which implies that
$$
S_Y \cdot R_Y = S \cdot R + \alpha < 0
$$
and hence $R_Y \subset S_Y$. In particular, $R_Y =
\varphi_{*}^{-1}(R)$. Moreover, for the curves $E'_{1,Y} = S_{Y}
\cdot E$, $Z_Y = \varphi_{*}^{-1}(Z)$ and $E'_{i,Y} =
\varphi_{*}^{-1}(E'_{i})$, $i \geqslant 2$, we have

\begin{itemize}

\item $E'_{1,Y} \cdot R_{Y} = 1$ and $E'_{1,Y} \cdot Z_{Y} = 0$;

\item $R_{Y} = \mathbb{R}_{\scriptscriptstyle \geqslant 0}[E'_{i,Y}] = \mathbb{R}_{\scriptscriptstyle \geqslant 0}[Z_{Y}]$ for
all $i \geqslant 2$.

\end{itemize}
On the other hand, we get
$$
0 = E'_{1,Y} \cdot Z_Y = E \cdot Z_Y = E \cdot R_Y = E'_{1,Y}
\cdot R_Y = 1,
$$
a contradiction.
\end{proof}

Thus, $\mathrm{cont_{\scriptscriptstyle R}}$ is a small
contraction. Then $R$ is generated by a $(-1)$-curve on $S$.
Consider the $(K_{X} + S)$-flip:
$$
\xymatrix{
&X\ar@{-->}[rr]^{\tau}\ar@{->}[rd]_{\mathrm{cont_{\scriptscriptstyle R}}}&&X^{+}\ar@{->}[ld]^{\mathrm{cont_{\scriptscriptstyle R}^{+}}}&\\
&&W,&&}
$$
so that the map $\tau$ is an isomorphism in codimension $1$, for
every curve $R^{+} \subset X^{+}$, which is contracted by
$\mathrm{cont_{\scriptscriptstyle R}^{+}}$, we have $(K_{X^{+}} +
S^{+}) \cdot R^{+}
> 0$, where $S^{+} = \tau_{*}(S)$,
$3$-fold $X^{+}$ is $\mathbb{Q}$-factorial and the pair $(X^{+},
S^{+})$ is purely log terminal (see \cite{kollar-92} and
\cite[Proposition 3.36, Lemma 3.38]{kollar-mori}). Let
$$
\xymatrix{
&&T\ar@{->}[ld]_{f}\ar@{->}[rd]^{f^{+}}&&\\%
&X\ar@{-->}[rr]_{\tau}&&X^{+}&}
$$
be resolution of indeterminacies of $\tau$ over $W$. Then $f$ is a
sequence of blow ups at smooth centers over $R$ with exceptional
divisors $G_{1}, \ldots, G_{s} \subset T$ such that $G_{i}$
constitute the $f^{+}$-exceptional locus and $Z = f^{+}(\sum_{i =
1}^{s} G_{i})$ is a union of all $\mathrm{cont_{\scriptscriptstyle
R}^{+}}$-exceptional curves.

\begin{lemma}
\label{theorem:tau-induces-good-contraction-on-S} We have $Z
\subseteq \mathrm{Bs}(|-n(K_{X^{+}} + D^{+})|)$ and $R^{+}
\not\subset S^{+}$ for every $R^{+} \subseteq Z$.
\end{lemma}

\begin{proof}
The statement follows from conditions $K_{X^{+}} + S^{+} =
\tau_{*}(K_{X} + S)$, $R \not\subset \mathrm{Supp}(-K_{S})$,
$(K_{X^{+}} + S^{+}) \cdot R^{+} > 0$ for every $R^{+} \subseteq
Z$ and the fact that $f_{*}^{-1}(S) \cdot f_{*}^{-1}(-n(K_{X} +
S)) = f_{*}^{-1}(S \cdot (-n(K_{X} + S)))$ (the latter holds
because $f$ is a sequence of blow ups at smooth centers).
\end{proof}

It follows from
Lemma~\ref{theorem:tau-induces-good-contraction-on-S} that $S^{+}
\simeq \mathrm{cont_{\scriptscriptstyle R}}(S)$ and $\tau$ induces
contraction $\tau_{\scriptscriptstyle S}: S \longrightarrow S^{+}$
of the $(-1)$-curve in $R$. Then, since $K_{X^{+}} + S^{+} =
\tau_{*}(K_{X} + S)$, the divisor
$$
-(K_{X^{+}} + S^{+})\big\vert_{S^{+}} \equiv -K_{S^{+}} =
\tau_{\scriptscriptstyle S*}(C)
$$
is nef and big. Moreover, the surface $S^{+}$ has only log
terminal singularities by the Inversion of adjunction, which
implies that $\mathrm{Bs}(|-nK_{S^{+}}|) = \emptyset$ by the
Basepoint-free Theorem.

\begin{lemma}
\label{theorem:many-sections} Inequality $h^{0}(S,
\mathcal{O}_{S}(-nK_{S})) \geqslant 2$ holds.
\end{lemma}

\begin{proof}
We have $R^{1}(\mathrm{cont_{\scriptscriptstyle
R}})_{*}(-n(K_{X}+S)-S) = 0$ by the relative Kawamata--Viehweg
Vanishing Theorem. This and the isomorphism $S^{+} \simeq S^{*} =
\mathrm{cont_{\scriptscriptstyle R}}(S)$ imply that the
push-forwards to $W$ of exact sequences $0 \to
\mathcal{O}_{X}(-n(K_{X}+S)-S) \to \mathcal{O}_{X}(-n(K_{X}+S))
\to \mathcal{O}_{S}(-n(K_{X}+S)\big\vert_{S}) \to 0$ and $0 \to
\mathcal{O}_{X^{+}}(-n(K_{X^{+}}+S^{+})-S^{+}) \to
\mathcal{O}_{X^{+}}(-n(K_{X^{+}}+S^{+})) \to
\mathcal{O}_{S^{+}}(-n(K_{X^{+}}+S^{+})\big\vert_{S^{+}}) \to 0$
coincide with exact sequence
$$
0 \to \mathcal{O}_{W}(-n(K_{W}+S^{*})-S^{*}) \to
\mathcal{O}_{W}(-n(K_{W}+S^{*})) \to
\mathcal{O}_{S^{*}}(-n(K_{W}+S^{*})\big\vert_{S^{*}}) \to 0.
$$
Then it follows from $\mathrm{Bs}(|-nK_{S^{+}}|) = \emptyset$ that
$h^{0}(S, \mathcal{O}_{S}(-nK_{S})) \geqslant 2$.
\end{proof}

From Lemma~\ref{theorem:many-sections} we get contradiction with
Lemma~\ref{theorem:curves-on-p-2}. Thus, {\bf Case (1)} is
impossible, and we pass to

\smallskip

{\bf Case (2).} The curve $C = \Delta_{1}$ is singular. Since $C
\sim -K_{S}$ and the pair $(S, C)$ is log canonical, we have
$p_{a}(C) = 1$ and the only singular point on $C$ is an ordinary
double point $O$. Let $\varphi: Y \longrightarrow X$ be the blow
up of $X$ at $C$ with exceptional divisor $E$. Locally near $O$
there is an analytic isomorphism
$$
(X,S,\Delta) \simeq \big(\mathbb{C}^{3}_{x,y,x}, \{x = 0\}, \{yz =
0\}\big).
$$
Then locally over $O$ we have the following representation for
$Y$:
$$
Y = \{yzt_{0} = xt_{1}\} \subset \mathbb{C}^{3}_{x,y,z} \times
\mathbb{P}^{1}_{t_{0},t_{1}},
$$
which implies that the only singular point on $Y$ is a
non-$\mathbb{Q}$-factorial quadratic singularity. Then, since
$$
K_{Y} + \varphi_{*}^{-1}(S) = \varphi^{*}(K_{X} + S),
$$
after a small resolution $\psi: \widetilde{Y} \longrightarrow Y$
we may pass from $(X,S)$ to the pair
$(\widetilde{Y},\psi_{*}^{-1}(\varphi_{*}^{-1}(S)))$ as above and
apply the arguments from {\bf Case (1)}.
\end{proof}

Applying the same arguments as in the proof of
Proposition~\ref{theorem:curves-on-p-2} to $\widetilde{S} =
\mathbb{F}_{m}$, we see that the case $N = 1$ is impossible.
Finally, in the case when $N \geqslant 2$ we apply the same
arguments as in the proof of
Proposition~\ref{theorem:curves-on-p-2}, replacing the curve
$\Delta_1$ with the cycle $\sum_{i = 1}^{N}\Delta_{i}$.

Thus, the surface $S$ does not have $\mathbb{Q}$-complements, and
we get the following

\begin{corollary}
\label{theorem:every-thing-is-smooth} In the notation of
Example~\ref{example:counter-example}, we have $S =
\mathbb{P}_{Z}(\mathcal{E})$ and
$\mathrm{Supp}(-n(K_{X}+S)\big\vert_{S}) = C$. In particular,
$\mathrm{Bs}(|-n(K_{X}+S)|) \cap S = C$.
\end{corollary}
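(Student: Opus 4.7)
The plan is to combine the non-existence of a $\mathbb{Q}$-complement on $S$, which was established in the body of this section, with the classification of the smooth surfaces that can realize the pathology $\mathrm{Bs}(|-nK_{S}|)\ne\emptyset$ for $n\gg 0$. First I would observe that $S$ cannot be rational: every smooth rational surface with $-K_{S}$ nef admits an $n$-complement by Shokurov's theorem on complements (the same circle of results cited in Lemma~\ref{theorem:S-is-rational}), so a rational $S$ would contradict what was proved above in Proposition~\ref{theorem:p-2-case} and the ensuing reduction to the non-rational $\mathbb{F}_{m}$ and $N\geqslant 2$ cases. Hence $S$ must be non-rational.

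Next I would invoke exactly the classification used in the proof of Lemma~\ref{theorem:S-is-rational}, namely \cite{fed-kud} together with \cite[Corollary~2.2, Example~2.1]{shok-complements}. These isolate, among smooth non-rational surfaces with $-K_{S}$ nef and $K_{S}^{2}=0$ (Corollary~\ref{theorem:K-S-square}) for which $\mathrm{Bs}(|-nK_{S}|)\ne\emptyset$, exactly the projectivization $\mathbb{P}_{Z}(\mathcal{E})$ with $Z$ an elliptic curve and $\mathcal{E}$ the indecomposable rank-$2$ bundle of degree $0$ from \cite{atiyah}. This identifies $S$ with the surface of Example~\ref{example:counter-example}, and in that picture $-K_{S}\equiv 2C$, where $C$ is the tautological section and the unique irreducible curve in the extremal ray $R_{1}=\mathbb{R}_{\scriptscriptstyle\geqslant 0}[C]$.

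For the remaining two assertions I would argue as follows. By Lemma~\ref{theorem:curves-on-p-2} the space $H^{0}(S,\mathcal{O}_{S}(-nK_{S}))$ is one-dimensional, so $|-nK_{S}|$ consists of a single effective divisor, which must equal $2nC$ because $-K_{S}\equiv 2C$ and $C$ is the only curve in its ray. Therefore $\mathrm{Supp}(-n(K_{X}+S)\big\vert_{S}) = \mathrm{Supp}(2nC) = C$, and combining with the identity $\mathrm{Bs}(|-n(K_{X}+S)|)\cap S = \mathrm{Bs}(|-nK_{S}|)$ from Proposition~\ref{theorem:restriction-on-s-of-l} yields $\mathrm{Bs}(|-n(K_{X}+S)|)\cap S = C$.

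The main obstacle is the classification step in the middle: one must be sure that the complement-theoretic results of Fedoruk--Kudryashov and Shokurov really single out $\mathbb{P}_{Z}(\mathcal{E})$, with the specified Atiyah bundle, as the \emph{only} smooth non-rational surface compatible with the numerical data $K_{S}^{2}=0$, $-K_{S}$ nef, and $\mathrm{Bs}(|-nK_{S}|)\ne\emptyset$ for all $n\gg 0$. Once this is secured, all that remains is the elementary computation of $|-nK_{S}|$ on $\mathbb{P}_{Z}(\mathcal{E})$ and an application of Proposition~\ref{theorem:restriction-on-s-of-l}.
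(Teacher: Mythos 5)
Your proposal follows essentially the same route as the paper, whose entire proof is a citation of \cite[Theorem 1.3]{fed-kud} (the classification of surfaces without $\mathbb{Q}$-complements, which after the exclusions of this section forces $S = \mathbb{P}_{Z}(\mathcal{E})$) together with Proposition~\ref{theorem:restriction-on-s-of-l}; you have merely unpacked that citation into the two steps (non-rationality, then identification of the Atiyah surface) and made the final computation explicit. One small logical slip to repair: you invoke Lemma~\ref{theorem:curves-on-p-2} to get $h^{0}(S,\mathcal{O}_{S}(-nK_{S}))=1$, but that lemma is proved under the standing assumption of this section that $S$ admits a $\mathbb{Q}$-complement (its proof uses the decomposition $-K_S\sim\sum\Delta_i$), an assumption that has been refuted by the time the corollary is stated. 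This is harmless, since once $S=\mathbb{P}_{Z}(\mathcal{E})$ is identified one has $h^{0}(S,\mathcal{O}_{S}(2nC))=h^{0}(Z,S^{2n}\mathcal{E})=1$ directly from Atiyah's description of $\mathcal{E}$, so the unique member of $|-nK_{S}|$ is $2nC$ and the supports and base loci come out as claimed; but you should cite that computation rather than the lemma.
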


\begin{proof}
The statement follows from  \cite[Theorem 1.3]{fed-kud} and
Proposition~\ref{theorem:restriction-on-s-of-l}.
\end{proof}

Let $F$ be a fibre of the $\mathbb{P}^1$-bundle $S \to
\mathbb{P}^1$. Write
\begin{equation}
\nonumber S \big \vert _{S} = aC - bF,
\end{equation}
where $a$, $b \in \mathbb{Z}$. Note that $b < 0$ because $C$ is
$K_S$-trivial and hence $0 < S \cdot C = S \big\vert_{S} \cdot C =
-b$ by assumption on $K_S$-trivial curves.

\begin{lemma}
\label{theorem:mori-cone-of-b} Equality $\deg(\mathcal{N}_{C/X}) =
-b$ holds.
\end{lemma}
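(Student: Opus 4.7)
The plan is to compute $\deg(\mathcal{N}_{C/X})$ via the standard normal bundle sequence of the flag $C \subset S \subset X$, where all three are smooth:
\begin{equation}
\nonumber 0 \to \mathcal{N}_{C/S} \to \mathcal{N}_{C/X} \to \mathcal{N}_{S/X}\big\vert_{C} \to 0.
\end{equation}
Taking degrees, one gets $\deg(\mathcal{N}_{C/X}) = \deg(\mathcal{N}_{C/S}) + \deg(\mathcal{N}_{S/X}\big\vert_{C})$, so it suffices to evaluate each term.

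For the first summand, I would invoke the description in Example~\ref{example:counter-example}: on $S = \mathbb{P}_{Z}(\mathcal{E})$ with $\deg(\mathcal{E}) = 0$, the tautological section satisfies $C^{2} = 0$, hence $\deg(\mathcal{N}_{C/S}) = C \cdot C = 0$.

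For the second summand, $\deg(\mathcal{N}_{S/X}\big\vert_{C}) = (S \cdot C)_{X}$, and by adjunction this equals the self-intersection of $S\big\vert_{S}$ with $C$ computed on $S$. Using the expression $S\big\vert_{S} = aC - bF$ from the statement and the elementary fact that $F \cdot C = 1$ (since $F$ is a fibre and $C$ a section of the $\mathbb{P}^{1}$-bundle $S \to \mathbb{P}^{1}$), together with $C^{2} = 0$, I compute
\begin{equation}
\nonumber (aC - bF) \cdot C = a C^{2} - b (F \cdot C) = -b.
\end{equation}

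Combining the two contributions gives $\deg(\mathcal{N}_{C/X}) = 0 + (-b) = -b$, as desired. There is no real obstacle here; the only things one must check are the smoothness of $C$ (so the normal bundle sequence is genuinely a sequence of line bundles/rank-one sheaves on an elliptic curve, which is guaranteed since $C \simeq Z$ is smooth elliptic by Corollary~\ref{theorem:every-thing-is-smooth}) and the numerical values $C^{2} = 0$ and $F \cdot C = 1$, both of which are standard for the ruled surface in Example~\ref{example:counter-example}.
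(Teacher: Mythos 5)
Your proof is correct, but it is routed differently from the paper's. The paper's argument is a one-liner: since $C$ is a smooth elliptic curve in the smooth $3$-fold $X$, the adjunction-type identity $\det\mathcal{N}_{C/X} \simeq K_{C}\otimes K_{X}^{-1}\big\vert_{C}$ gives $\deg(\mathcal{N}_{C/X}) = 2g(C)-2-K_{X}\cdot C = -K_{X}\cdot C$, and then $-K_{X}\big\vert_{S} = -K_{S} + S\big\vert_{S} = (2+a)C - bF$ yields $-K_{X}\cdot C = -b$. You instead filter $\mathcal{N}_{C/X}$ through the flag $C \subset S \subset X$ and add the degrees of $\mathcal{N}_{C/S}$ and $\mathcal{N}_{S/X}\big\vert_{C}$. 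The two computations are essentially dual: your first summand $C^{2}=0$ is the same numerical input as the paper's $-K_{S}\cdot C = 2C^{2} = 0$ (they agree by adjunction on $S$ precisely because $C$ is elliptic), and your second summand $S\big\vert_{S}\cdot C = -b$ is the term the paper absorbs into $((2+a)C-bF)\cdot C$. Your version has the mild advantage of making explicit where the inclusion $C\subset S$ and the smoothness of the flag are used, and it does not require identifying $-K_{X}\big\vert_{S}$ as a divisor class; the paper's is shorter. One small terminological point: the identification $\deg(\mathcal{N}_{S/X}\big\vert_{C}) = S\big\vert_{S}\cdot C$ is not ``adjunction'' but simply $\mathcal{N}_{S/X}\simeq\mathcal{O}_{X}(S)\big\vert_{S}$ together with the projection formula; this does not affect the validity of the argument.
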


\begin{proof}
Since $C$ is a smooth elliptic curve, we have
$$
\deg(\mathcal{N}_{C/X}) = -K_{X} \cdot C = ((2 + a)C - bF) \cdot C
= -b.
$$
\end{proof}

Put $\mathcal{L}_{n} = |-n(K_{X} + S)|$. Then for the general
element $L_{n} \in \mathcal{L}_{n}$ we have
$$
L_{n} = M + \sum r_{i,S}B_{i,S} + \sum r_{i}B_{i},
$$
where $B_{i}$, $B_{i,S}$ are the base components of
$\mathcal{L}_{n}$, $r_{i}$, $r_{i,S} \geqslant 0$ the
corresponding multiplicities, $B_{i} \cap S = \emptyset$, $B_{i,S}
\cap S \ne \emptyset$ for all $i$, and the linear system $|M|$ is
movable. According to
Corollary~\ref{theorem:every-thing-is-smooth}, we have
$\mathrm{Bs}(|-n(K_{X}+D)|) \cap S = C$ and $B_{i,S} \cap S = C$
for all $i$, which implies that $\mathrm{Bs}(|M|) \cap S = C$ or
$\emptyset$. In what follows, we assume that $\mathrm{Bs}(|M|) =
\mathrm{Bs}(|M|) \cap S$ (see the proof of the Basepoint-free
Theorem in \cite{kollar-mori}). Furthermore, arguing exactly as in
the proof of Proposition~\ref{theorem:curves-on-p-2}, we can
replace $X$ by its blow up at the curve $C$. Then, after applying
Corollary~\ref{theorem:every-thing-is-smooth} and a number of blow
ups, in what follows we assume that the following conditions are
satisfied:

\begin{itemize}

\item $r_{i,S} = r > 0$ and $B_{i,S} = B$ for all $i$, where $B =
\mathbb{P}_{C}(\mathcal{N}_{C/X})$ with $B^{3} =
-\deg(\mathcal{N}_{C/X}) = b$ (see
Lemma~\ref{theorem:mori-cone-of-b});

\item $S \cdot B = C$;

\item the linear system $|M|$ is free and $M \cap B = \emptyset$;

\item $B_{j} \cap B \ne \emptyset$ for exactly one $j$ and the intersection is transversal, $r_{j} = r$,
$B_{j}^{2} \cdot B = b$.

\end{itemize}

\section{Exclusion of the non-complementary case}
\label{section:special-case}

We use notation and conventions of
Section~\ref{section:complements}. Let $\varphi: Y \longrightarrow
X$ be the blow up of $X$ at the curve $C$ with exceptional divisor
$E$. Put
$$
\begin{array}{c}
\nonumber S_{Y} = \varphi_{*}^{-1}(S), \qquad B_{Y} =
\varphi_{*}^{-1}(B), \qquad M_{Y} = \varphi_{*}^{-1}(M), \qquad
B_{i,Y} = \varphi_{*}^{-1}(B_{i}).
\end{array}
$$
Then for $m \gg 0$, $0 < \delta_{1}, \ \delta_{2} \ll 1$ and $0 <
c \leqslant 1$ we write
\begin{eqnarray}
\label{r-test-div}R = \varphi^{*}(-(K_{X} + S) +
mL_{n} - cL_{n}) + cM_{Y} + \delta_{1}S_{Y} + \delta_{2}E = \\
\nonumber = \varphi^{*}(mL_{n}) + (-1 + \delta_{1})S_{Y} +
(\delta_{2} - cr)E - \\
\nonumber - crB_{Y} - crB_{j,Y} - \sum_{i \ne j} cr_{i}B_{i,Y} -
K_{Y}.
\end{eqnarray}

\begin{proposition}
\label{theorem:r-is-nef-big} The divisor $R$ is nef and big for
$\delta_{1} \geqslant \delta_{2}$.
\end{proposition}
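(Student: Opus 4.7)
The plan is to collapse the two pullback terms of $R$ via the relation $L_n\equiv -n(K_X+S)$, obtaining
$$R = (1+n(m-c))\varphi^{*}(-(K_X+S)) + cM_Y + \delta_1 S_Y + \delta_2 E,$$
and then to verify bigness and nefness separately. Bigness is immediate: for $m\gg 0$ the scalar $1+n(m-c)$ is positive, and $\varphi^{*}(-(K_X+S))$ is the pullback under a birational morphism of the nef and big divisor $-(K_X+S)$, hence nef and big on $Y$; the remaining summands $cM_Y,\delta_1 S_Y,\delta_2 E$ are effective, and the sum of big and pseudoeffective is big. Moreover, $M_Y=\varphi^{*}M$ is itself nef, since $M\cap C=\emptyset$ ensures the strict transform equals the total transform of the free system $|M|$.

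For nefness I would intersect $R$ with an arbitrary irreducible curve $Z\subset Y$ and split into three cases. If $Z\not\subset S_Y\cup E$, every summand intersects $Z$ nonnegatively and we are done. If $Z\subset S_Y$, the isomorphism $\varphi|_{S_Y}\colon S_Y\simeq S$ and adjunction give $\varphi^{*}(-(K_X+S))|_{S_Y}=-K_S\sim 2C$, while $\varphi^{*}(S)=S_Y+E$ together with $E|_{S_Y}=C$ forces $S_Y|_{S_Y}=(a-1)C-bF$. Since $\overline{NE}(S)$ is generated by $[C]$ and $[F]$ by Example~\ref{example:counter-example}, nefness of $R|_{S_Y}$ reduces to two scalar inequalities: $R|_{S_Y}\cdot C = cM\cdot C-\delta_1 b\geqslant 0$, using nefness of $M$ together with $-b>0$ (the latter coming from Lemma~\ref{theorem:mori-cone-of-b} and the $K_S$-trivial hypothesis of Theorem~\ref{theorem:main} applied to $C$), and $R|_{S_Y}\cdot F = 2(1+n(m-c))+\delta_1(a-1)+\delta_2+cM\cdot F$, which is positive for $m\gg 0$.

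The remaining case, $Z\subset E$ with $Z\not\subset S_Y$, is where the hypothesis $\delta_1\geqslant \delta_2$ is used. Since $-(K_X+S)\cdot C=-K_S\cdot C=0$ and $M\cap C=\emptyset$, both $\varphi^{*}(-(K_X+S))\cdot Z$ and $M_Y\cdot Z$ vanish, so it suffices to prove $\delta_1 S_Y\cdot Z+\delta_2 E\cdot Z\geqslant 0$. Rewriting this via $\varphi^{*}(S)=S_Y+E$ as
$$\delta_2\,S\cdot\varphi_{*}(Z)\;+\;(\delta_1-\delta_2)\,C_Y\cdot Z,$$
the first term is $0$ when $Z$ is a fibre of $E\to C$ and equals $-d\delta_2 b>0$ (with $d=\deg(\varphi|_Z)\geqslant 1$) when $Z$ is a multisection; the second term is nonnegative because $Z\neq C_Y$ meets the irreducible section $C_Y=S_Y\cap E$ of the ruled surface $E$ in finitely many points. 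The sharp test is $Z$ a fibre of $E\to C$, where $R\cdot Z=\delta_1-\delta_2$, so the stated condition $\delta_1\geqslant \delta_2$ is both necessary and sufficient. The main obstacle lies precisely in this analysis on $E$: identifying the fibre of $E\to C$ as the extremal nef-testing curve and matching the coefficients $\delta_1,\delta_2$ accordingly.
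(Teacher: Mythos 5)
Your proof is correct and follows essentially the same route as the paper: reduce nefness to curves lying on $S_Y$ and on $E$, and identify the fibre $F_E$ of $E\to C$ as the critical curve with $R\cdot F_E=\delta_1-\delta_2$. The only cosmetic difference is that on $E$ the paper computes the generators $[B_Y\big\vert_E]$ and $[F_E]$ of $\overline{NE}(E)$, whereas you rewrite $\delta_1 S_Y+\delta_2 E=\delta_2\varphi^{*}(S)+(\delta_1-\delta_2)S_Y$ and use the fibre/multisection dichotomy; both yield the same estimates.
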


\begin{proof}
Since the divisors $-(K_{X} + S)$ and $M_{Y}$ are nef and big, it
suffices to prove that the divisor
$$
R = \varphi^{*}(-(K_{X} + S) + mL_{n} - cL_{n}) + cM_{Y} +
\delta_{1}S_{Y} + \delta_{2}E
$$
intersects every curve on the surfaces $S_{Y}$ and $E$
non-negatively.

\begin{lemma}
\label{theorem:case-of-s-y} Inequality $R \cdot Z \geqslant 0$
holds for every curve $Z$ on $S_{Y}$.
\end{lemma}

\begin{proof}
Since $S_Y \simeq S$, the cone $\overline{NE}(S_{Y})$ is generated
by the classes $[C_{Y}] = [S_{Y} \cdot E]$ and $[F_{Y}] =
[\varphi_{*}^{-1}(F)]$. Thus, it is enough to consider only the
cases when $Z = C$ or $F$.

We have $E \cdot C_{Y} = 0$ and
$$
S_{Y} \cdot C_{Y} = S \cdot C = -b > 0,
$$
which implies that $R \cdot C_{Y} > 0$. Furthermore, we have
$$
R \cdot F_{Y} \gg \varphi^{*}(L_{n}) \cdot F_{Y} = L_{n} \cdot F
\geqslant nC \cdot F = n \gg 0,
$$
and the assertion follows.
\end{proof}

\begin{lemma}
\label{theorem:case-of-e} Inequality $R \cdot Z \geqslant 0$ holds
for every curve $Z$ on $E$ and $\delta_{1} \geqslant \delta_{2}$.
\end{lemma}

\begin{proof}
Let $F_{E}$ be a fibre of the $\mathbb{P}^{1}$-bundle $E =
\mathbb{P}_{C}(\mathcal{N}_{C/X})$. We have
$$
(B_{Y}\big\vert_{E})^{2} = (\varphi^{*}(B) - E)^{2} \cdot E = 2B
\cdot C + E^{3} = 2C^{2} + E^{3} = -\deg(\mathcal{N}_{C/X}) = b <
0,
$$
which implies that the cone $\overline{NE}(E)$ is generated by the
classes $[-E\big\vert_{E}] = [B_{Y}\big\vert_{E}]$ and $[F_{E}]$
(see \cite[Lemma 1.22]{kollar-mori}). Thus, it is enough to
consider only the cases when $Z = -E\big\vert_{E}$ or $F_{E}$.

We have
$$
S_{Y} \cdot (-E\big\vert_{E}) = -S_{Y} \cdot E^{2} = 0,
$$
which implies that
$$
R \cdot (-E\big\vert_{E}) \geqslant \delta_{2}E \cdot
(-E\big\vert_{E}) = -b\delta_{2}
> 0.
$$
Furthermore, we have
$$
S_{Y} \cdot F_{E} = 1 \qquad\mbox{and}\qquad E \cdot F_{E} = -1,
$$
which implies that
$$
R \cdot F_{E} = \delta_{1} - \delta_{2} \geqslant 0,
$$
and the assertion follows.
\end{proof}

Lemmas~\ref{theorem:case-of-s-y} and \ref{theorem:case-of-e} prove
Proposition~\ref{theorem:r-is-nef-big}.
\end{proof}

Take $c = 1/r$ in \eqref{r-test-div}. Then we obtain
$$
\ulcorner R \urcorner = \varphi^{*}(mL_{n}) - B_{Y} - B_{j,Y} +
\sum_{i \ne j} \ulcorner -cr_{i} \urcorner B_{i,Y} - K_{Y},
$$
and Proposition~\ref{theorem:r-is-nef-big} and Kawamata--Viehweg
Vanishing Theorem imply that
\begin{equation}
\label{coh-ies-1} H^{i}(Y,\mathcal{O}_{Y}(\varphi^{*}(mL_{n}) -
B_{Y} - B_{j,Y} + \sum_{i \ne j} \ulcorner -cr_{i} \urcorner
B_{i,Y}))=0
\end{equation}
for all $i > 0$.

\begin{lemma}
\label{theorem:ex-sec-2} Inequality
$$
H^{0}(B_{Y}, \mathcal{O}_{B_{Y}}((\varphi^{*}(mL_{n}) - B_{j,Y}
+\sum_{i \ne j} \ulcorner -cr_{i} \urcorner
B_{i,Y})\big\vert_{B_{Y}})) \ne 0
$$
holds.
\end{lemma}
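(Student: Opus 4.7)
The plan is to apply the restriction short exact sequence
$$
0 \to \mathcal{O}_Y(D - B_Y) \to \mathcal{O}_Y(D) \to \mathcal{O}_{B_Y}(D|_{B_Y}) \to 0,
$$
where $D := \varphi^*(mL_n) - B_{j,Y} + \sum_{i \ne j} \ulcorner -cr_i \urcorner B_{i,Y}$ is the divisor whose restriction appears in the lemma. The vanishing \eqref{coh-ies-1} gives $H^1(Y, D - B_Y) = 0$, so the long exact sequence yields a surjection $H^0(Y, D) \twoheadrightarrow H^0(B_Y, D|_{B_Y})$. The lemma will therefore follow once we show that the divisor class $D|_{B_Y}$ is effective on $B_Y$.

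To analyse $D|_{B_Y}$ I would exploit that $\varphi$ blows up $C = S \cap B$, which separates the strict transforms of $S$ and $B$. A local calculation in coordinates $X \simeq \mathbb{C}^3_{x,y,z}$ with $S = \{x = 0\}$ and $B = \{y = 0\}$ shows that $S_Y$ and $B_Y$ meet the exceptional $\mathbb{P}^1$-bundle $E \to C$ in two distinct sections $[1:0]$ and $[0:1]$, hence $S_Y \cap B_Y = \emptyset$ and $S_Y|_{B_Y} = 0$; by the last bullet of Section~\ref{section:complements} also $B_{i,Y}|_{B_Y} = 0$ for $i \ne j$. Combining this with $\varphi^*(L_n) = -n(K_Y + S_Y)$, adjunction $K_{B_Y} = (K_Y + B_Y)|_{B_Y}$, and the isomorphism $B_Y \simeq B = \mathbb{P}_C(\mathcal{N}_{C/X})$ (under which $B_Y|_{B_Y}$ corresponds to $B|_B - C_B$, because $\varphi^*(B) = B_Y + E$ and $E \cap B_Y = C$), the restriction simplifies to
$$
D|_{B_Y} \;\sim\; mn\,H \;-\; B_{j,Y}|_{B_Y},\qquad H := -(K_X + S)|_B,
$$
a large positive multiple of the nef class $H$ minus a single effective curve.

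The main obstacle is to show that this class is effective on $B_Y$ for $m \gg 0$, and I would do so by proving that $H$ is big on $B$. Using $L_n = M + rB + rB_j + \sum_{i \ne j} r_i B_i$ together with $M \cdot B = 0$ and $B_i \cdot B = 0$ for $i \ne j$, the restriction $L_n|_B$ simplifies to $r(B|_B + B_j|_B)$, yielding $n^2 H^2 = 2r^2(b + B^2 \cdot B_j)$; pairing $L_n$ with the curve $\Gamma = B \cdot B_j$ and invoking nefness of $L_n$ gives $b + B^2 \cdot B_j \geq 0$, and I expect the strict inequality to come from the assumption on $K_S$-trivial curves (the equality case would produce a $(K_X + S)$-trivial curve on $X$ excluded by a Mori-theoretic argument analogous to those in Section~\ref{section:complements}). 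With $H$ big, Kawamata--Viehweg on $B_Y$ yields $H^1(B_Y, D|_{B_Y}) = 0$, and Hirzebruch--Riemann--Roch gives $h^0(B_Y, D|_{B_Y}) = \chi(B_Y, D|_{B_Y}) > 0$ for $m \gg 0$, completing the proof.
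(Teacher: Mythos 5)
Your reduction to showing that $D\big\vert_{B_Y}$ is effective is fine (the surjection from \eqref{coh-ies-1} is not even needed for the lemma itself), and the simplification $D\big\vert_{B_Y}\sim \varphi^{*}(mL_{n})\big\vert_{B_{Y}} - B_{j,Y}\big\vert_{B_{Y}}$ agrees with the paper. The gap is in the positivity step: the bigness of $H=-(K_X+S)\big\vert_B$, i.e.\ the strict inequality $b+B^{2}\cdot B_{j}>0$, is not only left unproved (``I expect\dots'') but is false for the intersection data fixed at the end of Section~\ref{section:complements}. Since $B_j$ is one of the base components with $B_j\cap S=\emptyset$, we have $B_j\cap C=\emptyset$, hence $\varphi^{*}(B_j)=B_{j,Y}$ and $B^{2}\cdot B_{j}=B_{Y}^{2}\cdot B_{j,Y}=B_{Y}\big\vert_{B_{Y}}\cdot B_{j,Y}\big\vert_{B_{Y}}=bF_{B_{Y}}\cdot(E\big\vert_{B_{Y}}+bF_{B_{Y}})=b$. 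Therefore $n^{2}H^{2}=2r^{2}(b+B^{2}\cdot B_{j})=4r^{2}b<0$: the class $H$ has negative square on $B$, so it cannot be big, and Kawamata--Viehweg plus Riemann--Roch cannot be applied as you propose. (That $H$ is at the same time nef, being the restriction of a nef divisor, is precisely the sort of internal contradiction this section is designed to extract; but your argument cannot lean on ``nef and big'' without first establishing bigness, and that step fails.)

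The paper's proof takes a different and more elementary route that sidesteps positivity of squares entirely: working on $B_{Y}\simeq\mathbb{P}_{C}(\mathcal{N}_{C/X})$ it identifies $E\big\vert_{B_{Y}}$ with the tautological section, computes $B_{Y}\big\vert_{B_{Y}}\sim bF_{B_{Y}}$ and $B_{j,Y}\big\vert_{B_{Y}}\sim E\big\vert_{B_{Y}}+bF_{B_{Y}}$, and concludes $\varphi^{*}(mL_{n})\big\vert_{B_{Y}}\sim 2mrB_{j,Y}\big\vert_{B_{Y}}$. The divisor in the lemma is then $\sim(2mr-1)B_{j,Y}\big\vert_{B_{Y}}$, a positive multiple of the effective curve $B_{j,Y}\cap B_{Y}$, so its $H^{0}$ is nonzero irrespective of the sign of its self-intersection. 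To salvage your approach you would have to replace ``big, hence effective by Riemann--Roch'' with this direct observation that the restricted class is a positive multiple of an effective curve class.
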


\begin{proof}
Note that $(\sum_{i \ne j} \ulcorner -cr_{i} \urcorner
B_{i,Y})\big\vert_{B_{Y}} = 0$. Let us prove that
$$
H^{0}(B_{Y}, \mathcal{O}_{B_{Y}}((\varphi^{*}(mL_{n})-
B_{j,Y})\big\vert_{B_{Y}})) \ne 0.
$$
We have
$$
\varphi^{*}(mL_{n}) = mM_{Y} + mrB_{Y} + mrB_{j,Y} + mrE + \sum_{i
\ne j} mr_{i} B_{i,Y},
$$
which implies that
$$
\varphi^{*}(mL_{n})\big\vert_{B_{Y}} = mrB_{Y}\big\vert_{B_{Y}} +
mrB_{j,Y}\big\vert_{B_{Y}} + mrE\big\vert_{B_{Y}}.
$$
Further, since $B_{Y} = \varphi^{*}(B) - E$ and $\varphi^{*}(B)
\cdot E^{2} = -B \cdot C = 0$, we obtain
$$
(E\big\vert_{B_{Y}})^{2} = E^{2} \cdot B_{Y} = -E^{3} = -b
$$
and
$$
(B_{j,Y}\big\vert_{B_{Y}})^{2} = B_{j}^{2} \cdot B = b,
$$
which implies that $E\big\vert_{B_{Y}}$ is the tautological
section of the $\mathbb{P}^{1}$-bundle $B_{Y} =
\mathbb{P}_{C}(\mathcal{N}_{C/X})$ with a fibre $F_{B_{Y}}$, and
$B_{j,Y}\big\vert_{B_{Y}} \sim E\big\vert_{B_{Y}} + bF_{B_{Y}}$.
Furthermore, we have
$$
(B_{Y}\big\vert_{B_{Y}})^{2} = B_{Y}^{3} = \varphi^{*}(B)^{3} -
E^{3} = 0
$$
and
$$
B_{Y}\big\vert_{B_{Y}} \cdot E \big\vert_{B_{Y}} = B_{Y}^{2} \cdot
E = E^{3} = b,
$$
which implies that $B_{Y}\big\vert_{B_{Y}} \sim bF_{B_{Y}}$. Thus,
we get
$$
\varphi^{*}(mL_{n})\big\vert_{B_{Y}} \sim
2mrB_{j,Y}\big\vert_{B_{Y}},
$$
which implies that
$$
\varphi^{*}(mL_{n})\big\vert_{B_{Y}} - B_{j,Y}\big\vert_{B_{Y}}
\sim (2mr-1)B_{j,Y}\big\vert_{B_{Y}}
$$
and hence $H^{0}(B_{Y}, \mathcal{O}_{B_{Y}}((\varphi^{*}(mL_{n})-
B_{j,Y})\big\vert_{B_{Y}})) \ne 0$.
\end{proof}

From \eqref{coh-ies-1} and the exact sequence
\begin{eqnarray}
\nonumber 0 \to \mathcal{O}_{Y}(\varphi^{*}(mL_{n}) - B_{Y} -
B_{j,Y} + \sum_{i \ne j} \ulcorner -cr_{i} \urcorner B_{i,Y})
\to \\
\nonumber \to \mathcal{O}_{Y}(\varphi^{*}(mL_{n}) - B_{j,Y} +
\sum_{i \ne j} \ulcorner -cr_{i} \urcorner B_{i,Y}) \to \\
\nonumber \to \mathcal{O}_{B_{Y}}((\varphi^{*}(mL_{n}) - B_{j,Y}
+\sum_{i \ne j} \ulcorner -cr_{i} \urcorner
B_{i,Y})\big\vert_{B_{Y}}) \to 0
\end{eqnarray}
we get exact sequence
\begin{eqnarray}
\nonumber 0 \to H^{0}(Y,\mathcal{O}_{Y}(\varphi^{*}(mL_{n}) -
B_{Y}- B_{j,Y} +
\sum_{i \ne j} \ulcorner -cr_{i} \urcorner B_{i,Y})) \to \\
\nonumber \to H^{0}(Y,\mathcal{O}_{Y}(\varphi^{*}(mL_{n})-
B_{j,Y}+\sum_{i \ne j} \ulcorner -cr_{i}
\urcorner B_{i,Y})) \to \\
\nonumber \to H^{0}(B_{Y},
\mathcal{O}_{B_{Y}}((\varphi^{*}(mL_{n})- B_{j,Y}+\sum_{i \ne j}
\ulcorner -cr_{i} \urcorner B_{i,Y})\big\vert_{B_{Y}})) \to 0,
\end{eqnarray}
which implies, since $-r_{i} \leqslant \ulcorner -cr_{i} \urcorner
\leqslant 0$ and $B_{Y}$, $B_{j,Y}$, $B_{i,Y}$ are the base
components of the linear system $|\varphi^{*}(mL_{n})|$, that
\begin{eqnarray}
\nonumber H^{0}(Y,\mathcal{O}_{Y}(\varphi^{*}(mL_{n}) - B_{Y}-
B_{j,Y} + \sum_{i \ne j} \ulcorner -cr_{i} \urcorner
B_{i,Y})) \simeq \\
\nonumber \simeq H^{0}(Y,\mathcal{O}_{Y}(\varphi^{*}(mL_{n}) -
B_{j,Y} + \sum_{i \ne j} \ulcorner -cr_{i} \urcorner B_{i,Y}))
\simeq H^{0}(Y,\mathcal{O}_{Y}(\varphi^{*}(mL_{n})))
\end{eqnarray}
and
$$
H^{0}(B_{Y}, \mathcal{O}_{B_{Y}}((\varphi^{*}(mL_{n})-
B_{j,Y}+\sum_{i \ne j} \ulcorner -cr_{i} \urcorner
B_{i,Y})\big\vert_{B_{Y}})) = 0,
$$
a contradiction with Lemma~\ref{theorem:ex-sec-2}.
Theorem~\ref{theorem:main} is completely proved.

\end{document}